
\documentclass[a4paper,12pt,reqno]{amsart}
\newtheorem{theorem}{Theorem}
\newtheorem{lemma}{Lemma}
\newtheorem{proposition}{Proposition}
\newtheorem{corollary}{Corollary}
\newtheorem*{thmA}{Theorem A}
\theoremstyle{definition}

\theoremstyle{remark}

\def\beq{\begin{equation}}
\def\eeq{\end{equation}}
\numberwithin{equation}{section}
\newcommand{\R}{{\mathbb R}}

\newcommand{\E}{\mathbb E}
\newcommand{\B}{{\mathbb B}}
\newcommand{\Sp}{{\mathbb S}}

\newcommand{\calAp}{A_{\mathcal P}}

\newcommand{\calO}{{\mathcal O}}
\newcommand{\calM}{{\mathcal M}}
\newcommand{\calP}{{\mathcal P}}

\newcommand{\cp}{{\rm cap}\,}
\begin{document}
\title[]{On Lundh's percolation diffusion}
\author{Tom Carroll}
\address{Department of Mathematics\\
University College Cork\\
Cork, Ireland}
\email{t.carroll@ucc.ie}
\author{Julie O'Donovan}
\address{Department of Mathematics\\
University College Cork\\
Cork, Ireland}
\email{j.odonovan@ucc.ie}
\author{Joaquim Ortega-Cerd\`a }
\thanks{The third author is supported by the project MTM2008-05561-C02-01 and
 the grant 2009 SGR 1303}
\address{Departament de Matem\`atica Aplicada i An\`alisi\\
Universitat de Barcelona\\
Gran Via 585\\
08007 Barcelona, Spain.}
\email{jortega@ub.edu}

\date{\today}

\begin{abstract}
A collection of spherical obstacles in the ball in Euclidean space is said to be 
\textsl{avoidable\/} for Brownian motion if there is a positive probability that 
Brownian motion diffusing from some point in the ball will avoid 
all the obstacles and reach the boundary of the ball. 
The centres of the spherical obstacles are generated according to a Poisson point
process while the radius of an obstacle is a deterministic function depending only
on the distance from the obstacle's centre to the centre of the ball. 
Lundh has given the name \textsl{percolation diffusion\/} to this process if 
avoidable configurations are generated with positive probability. 
An integral condition for percolation diffusion is derived in terms of  the 
intensity of the Poisson point process and the function that determines the radii of the 
obstacles. 
\end{abstract}

\maketitle

\section{Introduction}
Lundh proposed in \cite{Lundh} a percolation model in the ball 
$\B = \{x\in \R^d: \vert x \vert< 1\}$, $d\geq 3$, involving diffusion through a random 
collection of spherical obstacles. In Lundh's formulation, the radius of an 
obstacle is proportional to the distance from its centre to the boundary 
$\Sp = \{x\in \R^d: \vert x \vert = 1\}$ of the ball. 
The centres of the obstacles are generated at random by a Poisson point process with
a spherically symmetric intensity $\mu$. 
Lundh called a random collection of obstacles is \textsl{avoidable\/} if 
Brownian motion diffusing from a point in the ball $\B$ 
has a positive probability of reaching the outer boundary $\Sp$ 
without first hitting any of the obstacles. 
Lundh set himself the task of characterising those Poisson intensities $\mu$ which 
would generate an avoidable collection of obstacles with positive probability, 
and named this phenomenon \textsl{percolation diffusion\/}.
Our main objective herein is to extend Lundh's work by removing some of his 
assumptions on the Poisson intensities and on the radii of the obstacles. 

Deterministic configurations of obstacles in two dimensions are considered in detail 
by Akeroyd \cite{Akeroyd} and by Ortega-Cerd\`a and Seip \cite{OS}, 
while O'Donovan \cite{Julie} and Gardiner and Ghergu \cite{GG} consider 
configurations in higher dimensions. 
The result below is taken from these articles. 
First some notation is needed. 
Let $B(x,r)$ and $S(x,r)$ stand for the Euclidean ball and sphere, respectively, with centre $x$ 
and radius $r$ and let $\overline{B}(x,r)$ stand for the closed ball with this centre and radius. 
Let $\Lambda$ be a countable set of points in the ball $\B$ which is 
\textsl{regularly spaced\/} in that it has the following properties
\begin{enumerate}
\item[(a)] there is a positive $\epsilon$ such that if $\lambda$, $\lambda' \in \Lambda$, $\lambda \ne \lambda'$ and $\vert \lambda \vert \geq \vert \lambda'\vert$ then 
\beq\label{1}
\vert \lambda-\lambda'\vert \geq \epsilon\big( 1-\vert \lambda\vert \big).
\eeq
\item[(b)] there is an $r <1$ such that 
\beq\label{2}
\B = \bigcup_{\lambda \in \Lambda} B\big(\lambda,r(1-\vert\lambda\vert)\big).
\eeq
\end{enumerate} 
Let $\phi:[0,1) \to [0,1)$ be a decreasing function such that the closed balls 
$\big\{ \overline{B} \big(\lambda,\phi(\vert \lambda \vert \big) \big\}$, 
$\lambda \in \Lambda$, are disjoint, and set 
\[
\calO =  \bigcup_{\lambda\in\Lambda}
\overline{ B }\big(\lambda,\phi(\vert \lambda \vert \big) .
\]
Avoidability of the collection of spherical obstacles $\calO$ 
is equivalent to the harmonic measure condition $\omega(x,\Sp,\Omega) >0$, 
where $ \Omega = \B \setminus \calO$ 
and $x$ is some (any) point in the domain $\Omega$.
\begin{thmA}\label{thmA} 
The collection of spherical obstacles $\calO$ is avoidable if and only if 
\beq\label{4}
\int_0^1 \frac{dt}{(1-t)\log\big( (1-t)/\phi(t) \big)} < \infty \quad\mbox { if }\ d=2,
\eeq
\beq\label{5}
\int_0^1 \frac{\phi(t)^{d-2}}{(1-t)^{d-1}} < \infty \quad\mbox { if }\ d\geq3.
\eeq
\end{thmA}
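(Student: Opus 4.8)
The plan is to invoke the stated equivalence of avoidability with the positivity of $\omega(x,\Sp,\Omega)$, and to reformulate \eqref{4}--\eqref{5} as the convergence of a Wiener-type series over dyadic shells. Write $r_n=1-2^{-n}$, let $\Gamma_n=\{|x|=r_n\}$, and for large $n$ set $\sigma_n=\bigl(\phi(r_{n+1})/2^{-n}\bigr)^{d-2}$ when $d\ge3$ and $\sigma_n=\bigl(\log(2^{-n}/\phi(r_{n+1}))\bigr)^{-1}$ when $d=2$, with $\bar\sigma_n=\min(1,\sigma_n)$. A preliminary, bookkeeping step is to verify, for a decreasing $\phi$ with $\phi(t)<1-t$, that $\sum_n\sigma_n<\infty$ if and only if \eqref{4} (resp.\ \eqref{5}) holds: one compares the integral with a Riemann-type sum over the shells $\{r_n\le|x|<r_{n+1}\}$ using monotonicity of $\phi$, and, when $d=2$, one uses the observation that $\sum_n\sigma_n<\infty$ already forces $\sigma_n\to0$, hence $\phi(r_{n+1})=o(2^{-n})$, which controls $\log\bigl((1-t)/\phi(t)\bigr)$ from below on the $n$-th shell. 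It then suffices to prove that $\calO$ is avoidable precisely when $\sum_n\sigma_n<\infty$.

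For \textbf{sufficiency} I would build an explicit superharmonic majorant of $\omega(\cdot,\calO,\Omega)$. Let $G_\B$ be the Green function of $\B$, for $\lambda\in\Lambda$ put $c_\lambda=\min\{G_\B(x,\lambda):|x-\lambda|=\phi(|\lambda|)\}$, and set $w=\sum_{\lambda\in\Lambda}G_\B(\cdot,\lambda)/c_\lambda$. Then $w\ge0$ is superharmonic on $\B$, and the minimum principle inside each obstacle gives $w\ge1$ on $\calO$. When $\sum_n\sigma_n<\infty$ one has $\sigma_n\to0$, so $\phi(|\lambda|)$ is small relative to $1-|\lambda|$ for all but finitely many $\lambda$; hence $c_\lambda\asymp\phi(|\lambda|)^{d-2}$ and $G_\B(0,\lambda)\asymp 1-|\lambda|$ (with the logarithmic analogues for $d=2$, where the scale $1-|\lambda|$ already sits inside $G_\B$), so $w(0)\asymp\sum_\lambda\phi(|\lambda|)^{d-2}(1-|\lambda|)\asymp\sum_n\sigma_n<\infty$. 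Thus $w\not\equiv\infty$, so $w$ is the Green potential of its Riesz measure and its greatest harmonic minorant is $0$; therefore $\inf_\B w=0<1$. Since $w\ge1$ on $\calO$, the open set $\{w<1\}$ lies in $\Omega$, and for any $x_0$ in it, $\omega(x_0,\calO,\Omega)\le w(x_0)<1$, i.e.\ $\omega(x_0,\Sp,\Omega)>0$, so $\calO$ is avoidable.

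For \textbf{necessity} I would argue by contradiction: assume $v:=\omega(\cdot,\Sp,\Omega)\not\equiv0$ and deduce $\sum_n\sigma_n<\infty$. Since $\Sp$ is a $(d-1)$-sphere, it is non-polar, and the irregular points of $\partial\Omega$ form a polar set, so $\Sp$ contains a regular boundary point of $\Omega$; hence $M_n:=\sup_{\Gamma_n\cap\overline\Omega}v$ satisfies $\limsup_n M_n=1$, while $0<M_n<1$ for large $n$ because $v$ is positive and harmonic on $\Omega$. Applying optional stopping to the bounded martingale $v(X_{t\wedge\tau_\Omega})$ at the exit time of the shell $\{r_{n-K}<|x|<r_{n+K}\}$, with $K$ an absolute constant, and using $v\equiv0$ on $\partial\calO$, gives $v(x)\le(1-h_n(x))\max(M_{n-K},M_{n+K})$ for $x\in\Gamma_n\cap\Omega$, where $h_n(x)$ is the probability that Brownian motion from $x$ meets $\calO$ before leaving that shell. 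The key point is the uniform lower bound $h_n(x)\ge c\,\bar\sigma_n$: by \eqref{2} there is $\lambda^*\in\Lambda$ with $|x-\lambda^*|\le C2^{-n}$ and $1-|\lambda^*|\ge2^{-n-1}$, so $\phi(|\lambda^*|)\ge\phi(r_{n+1})$; routing Brownian motion from $x$ into a ball of radius $\asymp2^{-n}$ about $\lambda^*$ along a Harnack chain inside the shell and then applying the classical hitting estimate for the single ball $\overline B(\lambda^*,\phi(|\lambda^*|))$ yields the bound, with the $(d-2)$-capacity law for $d\ge3$ and the logarithmic law for $d=2$. This inequality forces $(M_n)$, along each residue class modulo $K$, to be eventually monotone: if $M_n>M_{n+K}$ then necessarily $M_{n-K}>M_n$, so a strict decrease would propagate leftward and contradict $\limsup M_n=1$. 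Hence $M_n\le M_{n+K}$ eventually, whence $M_n\le(1-c\bar\sigma_n)M_{n+K}$; telescoping and letting $M_{n+jK}\to1$ gives $M_{n_0}\le\prod_{n\ge n_0}(1-c\bar\sigma_n)$. As $M_{n_0}>0$, this forces $\sum_n\bar\sigma_n<\infty$, i.e.\ $\sum_n\sigma_n<\infty$.

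The main obstacle, in both directions, is the per-shell estimate, and specifically its \emph{uniformity}. Conditions \eqref{1}--\eqref{2} only place a neighbouring obstacle within a fixed (not small) multiple of $1-|x|$, so one cannot confine the argument to a single thin annulus; the device is to work in a shell that is thin outward but of bounded radial extent inward, within which a Harnack-chain argument for harmonic measure connects $x$ to the relevant obstacle, after which the hitting probability of one ball is computed from its Newtonian (or logarithmic) capacity. On the sufficiency side the analogous obstacle is the sharp behaviour of $G_\B$: the comparisons $c_\lambda\asymp\phi(|\lambda|)^{d-2}$ and $G_\B(0,\lambda)\asymp 1-|\lambda|$, which come from comparison with the Newtonian kernel away from $\Sp$ together with the boundary Harnack principle near $\Sp$. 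Everything else --- the reduction to $\sum_n\sigma_n$, the equivalence with \eqref{4}--\eqref{5}, and the elementary analysis of the recursion for $(M_n)$ --- is routine.
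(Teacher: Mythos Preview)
The paper does not actually prove Theorem~A; it is quoted as a known result from \cite{Akeroyd,OS,Julie,GG}, so there is no in-paper proof to compare against. The approach that pervades the present paper (and \cite{GG}, which handles $d\ge3$) is via minimal thinness: avoidability is equivalent, by Lemma~\ref{Prop.mt}, to minimal thinness of $\calO$ at a set of positive surface measure on $\Sp$, which by Aikawa's Wiener criterion \eqref{2.1} reduces to estimating $\cp(\calO\cap Q_k)$ Whitney-cube by Whitney-cube; the Aikawa--Borichev quasi-additivity \eqref{3.3} then gives $\cp(\calO\cap Q_k)\asymp\phi(x_k)^{d-2}$ in the regularly spaced setting, and summing $\ell(Q_k)^2\rho_k(\tau)^{-d}\phi(x_k)^{d-2}$ reproduces \eqref{5}. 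Your route is genuinely different and more self-contained: a Green-potential supersolution for sufficiency and a direct shell-by-shell recursion on $M_n=\sup_{\Gamma_n}\omega(\cdot,\Sp,\Omega)$ for necessity, bypassing both the minimal-thinness reformulation and the capacity quasi-additivity lemma. The outline is sound, with two caveats. In the sufficiency step you write $c_\lambda\asymp\phi(|\lambda|)^{d-2}$; for $d\ge3$ this should be $\phi(|\lambda|)^{2-d}$ (the ratio $G_\B(0,\lambda)/c_\lambda$ then comes out as you state, so this is a slip rather than a substantive error). In the necessity step the constant $K$ must be chosen, in terms of the $r$ in \eqref{2} and the constant $c$ in \eqref{phi}, so that the nearest centre $\lambda^*$ and the entire obstacle $\overline B\bigl(\lambda^*,\phi(|\lambda^*|)\bigr)$ lie strictly inside the annulus $\{r_{n-K}<|y|<r_{n+K}\}$; this is what underwrites the uniform bound $h_n\ge c\,\bar\sigma_n$ and deserves an explicit sentence. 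The trade-off is that the minimal-thinness route imports two black boxes (Aikawa's criterion and Aikawa--Borichev) but then needs only a capacity count, whereas your route needs nothing beyond Green-function asymptotics and Harnack chains but must keep careful track of annulus geometry.
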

Our goal is is to obtain a counterpart of this result for a random 
configuration of obstacles. 
We work with a Poisson random point process on the Borel subsets of the ball $\B$
with mean measure  $d\mu(x) = \nu(x)\,dx$ which is absolutely continuous 
relative to Lebesgue measure.
(It\^o presents a complete, concise treatment of this topic in Section~1.9 of 
his book \cite{Ito}). 
The radius function $\phi$ and the intensity function $\nu$  are assumed 
to satisfy, for some $C>1$, 
\beq\label{phinu}
\begin{cases}\frac{1}{C} \phi(x) \leq \phi(y) \leq C \phi(x)\\  
\frac{1}{C}\nu(x) \leq \nu(y) \leq C \nu(x)
\end{cases}\ \mbox{ if}\   y \in B\left(x,\frac{1-\vert x\vert}{2}\right).
\eeq
 It is also assumed that 
 \beq\label{phi}
 \frac{\phi(\vert x\vert )}{1-\vert x\vert} \leq c < 1 \ \mbox{ for }\ x\in \B.
 \eeq
and that 
\beq\label{nu}
(1-\vert x \vert) \phi(x)^{d-2} \nu(x) = O\left( \frac{1}{1-\vert x\vert} \right) 
\mbox{ as } \vert x \vert \to 1^-. 
\eeq
Let $\mathcal P$ be a realisation of points from this Poisson random point process 
and let 
\beq\label{6}
A_{\mathcal P} = \bigcup_{p \in \mathcal P} 
\overline{ B }\big(p,\phi(p) \big) , 
\qquad \Omega_{\mathcal P} = \B \setminus A_{\mathcal P},
\eeq
so that $\Omega_{\mathcal P}$ is an open, though not necessarily connected, subset 
of $\B$.  The archipelago of spherical obstacles $A_\calP$ is said to be \textsl{avoidable\/} 
if there is a positive probability that Brownian motion diffusing from some point in 
$\Omega_\calP$ reaches the unit sphere $\Sp$ before hitting the obstacles $A_\calP$, 
that is if the harmonic measure of $A_\calP$ relative to $\Omega_\calP$ satisfies
$\omega(x, A_\calP,\Omega_\calP)< 1$ for some $x$ in $\Omega_\calP$. 
If $\Omega_\calP$ is connected then, by the maximum principle, this condition 
does not depend on $x \in \Omega_\calP$.
We do not insist, however, on the configuration being avoidable for Brownian motion 
diffusing from the origin. 

We have \textsl{percolation diffusion\/} if there is a positive probability that the 
realisation of points from the Poisson random point process results in an 
avoidable configuration.
Our main result is 
\begin{theorem}\label{thm1}Suppose that \eqref{phinu}, 
\eqref{phi} and \eqref{nu} hold. 
Percolation diffusion occurs if and only if there is a set of points $\tau$ 
of positive measure on the sphere such that 
\beq\label{7*}
\int_\B \frac{(1-\vert x\vert^2)^2}{\vert x-\tau\vert^d} 
\,\phi(x)^{d-2} \nu(x)\,dx < \infty.
\eeq
Thus the random archipelago $\calAp$ is avoidable with positive probability 
if and only if the Poisson balayage of the measure 
$(1-\vert x \vert^2)\phi(x)^{d-2} \nu(x)\,dx$
is bounded on a set of positive measure on the boundary of the unit ball.

Furthermore,  in the case of percolation diffusion the random archi-pelago 
$\calAp$ is avoidable with probability one. 
\end{theorem}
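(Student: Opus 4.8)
The plan is to use a Borel--Cantelli style argument together with the known deterministic criterion of Theorem~A. The key observation is that the random point process has a natural scaling structure adapted to the hyperbolic geometry of the ball: partition $\B$ into a Whitney-type family of shells $\{Q_k\}$, and in each $Q_k$ further subdivide into boundedly many pieces of hyperbolic diameter comparable to $1$. On such a piece, the hypotheses \eqref{phinu} guarantee that $\phi$ and $\nu$ are each essentially constant, and \eqref{phi}, \eqref{nu} provide the size control needed to compare with the deterministic situation. The expected number of Poisson points landing in a given piece $P$ is $\mu(P)=\int_P\nu(x)\,dx$, which is comparable to $\nu(x_P)\,|P|$ for a representative point $x_P$.

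The first step is to establish a dichotomy for the expected number of obstacles per Whitney piece. Either (i) almost every piece carries at least one Poisson point with a uniform positive lower bound on the probability, in which case with probability one the realisation $\calP$ contains, for all but finitely many pieces, a point whose obstacle $\overline{B}(p,\phi(p))$ engulfs a fixed fraction of the piece --- so $\calAp$ \emph{dominates} (up to constants in the radii) a regularly spaced deterministic configuration of the type in Theorem~A with radius function comparable to $\phi$; or (ii) the $\mu$-mass is so thin that with probability one only finitely many pieces in certain regions are hit. The point is that \eqref{7*} is precisely the condition, after summing over the Whitney decomposition using $|x-\tau|\asymp(1-|x|)$ when $x$ is in the "Stolz-type" region pointing at $\tau$, that decides between these alternatives along the approach region to $\tau$; and one recognises in the sum $\sum_k \phi(x_k)^{d-2}\nu(x_k)(1-|x_k|)|P_k|/(1-|x_k|)^{d}$, restricted to pieces near the radius at $\tau$, exactly the discretised form of the integral \eqref{5} with the deterministic $\phi$ replaced by the random density of obstacles.

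For the "if" direction one argues: if \eqref{7*} holds on a set $\tau$ of positive measure, then for a.e.\ $\tau$ in that set the Poisson--balayage integral converges, hence by Borel--Cantelli only finitely many obstacles intrude into a thin cone (or rather a tangential approach region) at $\tau$; removing those finitely many obstacles does not affect avoidability, and the remaining configuration, being avoidable from points near $\Sp$ in that approach region by a comparison with the deterministic Theorem~A applied along the ray, has positive harmonic measure of $\Sp$. For the "only if" direction one shows the contrapositive: if \eqref{7*} fails for a.e.\ $\tau$ on $\Sp$, then for a.e.\ realisation and a.e.\ boundary point the obstacles accumulate densely enough along every approach region that the deterministic divergence criterion \eqref{5} of Theorem~A is met on a covering sub-configuration, forcing $\omega(x,\Sp,\Omega_\calP)=0$ for every $x$; here one uses that Theorem~A's condition is monotone under enlarging the obstacles and that the random obstacles, with probability one, contain such an enlargeable regularly spaced skeleton.

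Finally, the \emph{probability one} statement follows from a zero--one law: the event "$\calAp$ is avoidable" is a tail event for the independent family of Poisson restrictions $\calP\cap Q_k$ over the shells $Q_k$ (changing finitely many shells changes the obstacle set only in a compact subset of $\B$, which cannot affect whether $\omega(\cdot,\Sp,\Omega_\calP)<1$), so its probability is $0$ or $1$; combined with the two directions above, percolation diffusion (positive probability) is then the same as probability one. I expect the main obstacle to be the comparison step in the "only if" direction: extracting, almost surely, from the random obstacle cloud a \emph{deterministic-type} regularly spaced sub-configuration whose radius function still diverges in \eqref{5}, uniformly enough to kill harmonic measure along \emph{every} approach region simultaneously --- this requires a careful second-moment or large-deviation estimate for the number of Poisson points in each Whitney piece, using \eqref{phinu} to transfer the pointwise density bounds to the whole piece, and then patching these local statements into a global harmonic-measure-zero conclusion via the maximum principle and the additivity of the capacitary obstruction.
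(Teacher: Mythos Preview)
Your approach has genuine gaps in both directions, and the overall strategy of comparing to the deterministic Theorem~A is not the right one here.

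In the ``if'' direction, your Borel--Cantelli claim is simply false: convergence of \eqref{7*} at $\tau$ does \emph{not} force only finitely many obstacles to lie in any approach region to $\tau$. Take for instance the radial intensity $\nu(t)=(1-t)^{-d}$ with $\phi(t)=(1-t)^{\alpha}$ for $\alpha>1$: then \eqref{7*} converges, yet every Whitney piece carries on average a bounded-below number of Poisson points, so almost surely every Stolz region contains infinitely many obstacles. What the convergence of \eqref{7*} controls is not the \emph{count} of obstacles but the \emph{capacitary} sum $\sum \phi(p)^{d-2}$ weighted by the Poisson kernel. Your dichotomy (i)/(ii) is likewise false: $\mu(P)$ can tend to zero without being summable, so neither alternative holds. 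The comparison with Theorem~A also breaks down structurally, since that theorem requires a \emph{regularly spaced} set satisfying both separation \eqref{1} and covering \eqref{2}; a Poisson cloud with general $\nu$ almost surely satisfies neither.

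In the ``only if'' direction you yourself flag the difficulty, and it is real: when $\nu$ is small there are simply not enough random points to build a regularly spaced skeleton, so one cannot feed the configuration back into Theorem~A. The paper avoids all of this by working not with Theorem~A but with Aikawa's Wiener-type criterion for minimal thinness,
\[
W(A_\calP,\tau)=\sum_k \frac{\ell(Q_k)^2}{\rho_k(\tau)^d}\,\cp(A_\calP\cap Q_k),
\]
together with Lemma~\ref{Prop.mt}. The crucial step you are missing is a two-sided estimate $\E[\cp(A_\calP\cap Q_k)]\sim \phi(x_k)^{d-2}\mu(Q_k)$ (the lower bound needs a quasi-additivity of capacity due to Aikawa--Borichev and is where \eqref{nu} enters), which gives $\E[W(A_\calP,\tau)]\sim$ the balayage integral \eqref{7*}. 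One then uses Kolmogorov's three-series theorem --- not Borel--Cantelli --- to show that $\E[W(A_\calP,\tau)]<\infty$ is \emph{equivalent} to $W(A_\calP,\tau)<\infty$ almost surely (the terms $X_k$ are uniformly bounded and can be grouped into finitely many independent subseries). A Fubini argument between $\tau$-integration and expectation, together with the zero--one law at each fixed $\tau$, finishes both directions and gives the probability-one statement. Your tail-event zero--one law for global avoidability is fine in spirit, but the paper actually applies the zero--one law pointwise in $\tau$ to the series $W(A_\calP,\tau)$, which is what meshes with the Fubini step.
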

In the radial case the following corollary follows directly from Theorem~\ref{thm1}. 
\begin{corollary}\label{cor1}Suppose that, in addition to \eqref{phinu}, 
\eqref{phi} and \eqref{nu}, the intensity $\nu$ and the 
radius function $\phi$ are radial in that they depend only on $\vert x\vert$. Then 
percolation diffusion occurs if and only if 
\beq\label{7}
\int_0^1 (1-t)\, \phi(t)^{d-2}\, \nu(t) \,dt < \infty.
\eeq
\end{corollary}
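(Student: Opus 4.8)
The plan is to deduce the corollary from Theorem~\ref{thm1} by exploiting the rotational symmetry that is present in the radial case. First I would note that when $\phi$ and $\nu$ depend only on $|x|$ the integrand in \eqref{7*}, apart from the factor $|x-\tau|^{-d}$, is a radial function of $x$; hence, for any rotation $R$ of $\R^d$, the change of variables $x\mapsto Rx$ shows that
\[
J(\tau):=\int_\B \frac{(1-|x|^2)^2}{|x-\tau|^d}\,\phi(x)^{d-2}\nu(x)\,dx
\]
has the same value for every $\tau\in\Sp$. Consequently, the existence of a set of positive measure of directions $\tau$ for which $J(\tau)<\infty$ is equivalent to the finiteness of $J(\tau)$ for one, and hence every, $\tau\in\Sp$. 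This reduces Theorem~\ref{thm1} to the assertion that, for a fixed $\tau_0\in\Sp$, one has $J(\tau_0)<\infty$ if and only if \eqref{7} holds.

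Next I would pass to polar coordinates $x=t\xi$, with $t\in[0,1)$ and $\xi\in\Sp$, so that $dx=t^{d-1}\,dt\,d\sigma(\xi)$, where $\sigma$ is surface measure on $\Sp$. Since $|t\xi-\tau_0|^2=1-2t\,\xi\cdot\tau_0+t^2=|t\tau_0-\xi|^2$, the inner spherical integral is
\[
\int_\Sp \frac{d\sigma(\xi)}{|t\xi-\tau_0|^d}=\int_\Sp \frac{d\sigma(\xi)}{|t\tau_0-\xi|^d}=\frac{|\Sp|}{1-t^2},
\]
the last equality being the statement that the Poisson kernel $P(y,\xi)=|\Sp|^{-1}(1-|y|^2)\,|y-\xi|^{-d}$ for $\B$ integrates to $1$ over $\Sp$, applied at $y=t\tau_0$. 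Therefore
\[
J(\tau_0)=|\Sp|\int_0^1 t^{d-1}(1-t^2)\,\phi(t)^{d-2}\nu(t)\,dt .
\]

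It then remains only to compare this integral with the one in \eqref{7}. On the interval $[1/2,1)$ the factors $t^{d-1}$ and $(1-t^2)/(1-t)=1+t$ are bounded above and below by positive constants, so the two integrands are comparable there. On $[0,1/2)$ I would invoke \eqref{phinu}: since $B(0,1/2)=B\!\left(0,\frac{1-|0|}{2}\right)$, \eqref{phinu} yields $C^{-1}\phi(0)\le\phi(x)\le C\phi(0)$ and $C^{-1}\nu(0)\le\nu(x)\le C\nu(0)$ for $|x|<1/2$; hence $\phi^{d-2}\nu$ is bounded above and below there, and both $\int_0^{1/2}(1-t)\phi(t)^{d-2}\nu(t)\,dt$ and $\int_0^{1/2}t^{d-1}(1-t^2)\phi(t)^{d-2}\nu(t)\,dt$ are finite and positive. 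Combining the two ranges, $J(\tau_0)<\infty$ if and only if \eqref{7} holds, which gives the corollary.

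There is no serious obstacle in this argument: it is a symmetry reduction followed by the classical evaluation of the spherical average of $|y-\xi|^{-d}$. The only two points that require a word of care are the reduction of the ``positive-measure set of directions'' in Theorem~\ref{thm1} to a single scalar condition---which is immediate from the rotational invariance of the radial integrand---and the behaviour of the integrals near the centre of the ball, which is controlled by the local comparability hypothesis \eqref{phinu}.
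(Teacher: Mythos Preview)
Your proposal is correct and follows essentially the same route as the paper: use rotational symmetry to make the Poisson balayage $J(\tau)$ in \eqref{7*} independent of $\tau$, rewrite it in polar coordinates, and evaluate the inner spherical integral via the normalisation of the Poisson kernel to obtain an integral comparable to \eqref{7}. The paper's proof is terser (it leaves the evaluation of the inner integral and the comparison near the origin implicit), but the argument is the same.
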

Lundh's result \cite[Theorem~3.1]{Lundh} is the case $\phi(t) = c(1-t)$ of this corollary, 
in which case \eqref{7} becomes
\beq\label{8}
\int_0^1 (1-t)^{d-1} \nu(t)\,dt < \infty.
\eeq
This corresponds to the condition stated by Lundh that the radial intensity function 
should be integrable on $(0,\infty) $ when allowance is made for the fact that 
he works in the hyperbolic unit ball.  
As pointed out in \cite{JuliePhD}, Lundh's deduction from \eqref{8} 
(see \cite[Remark~3.2]{Lundh}) that percolation diffusion can only occur when 
the expected number of obstacles in a configuration is finite isn't correct. 
In fact, \eqref{8} holds in the case $\nu(t) = (1-t)^{1-d}$ and we have percolation diffusion. 
At the same time, the expected number of obstacles $N(\B)$ in the ball is 
\beq\label{9}
\E[N(\B)] = \int_\B d\mu(x) = \int_\B \frac{dx}{(1-\vert x \vert)^{d-1}} = \infty.
\eeq
Lundh's remark erroneously undervalues his work since it gives the 
impression that, in his original setting, percolation diffusion can only occur 
if the number of obstacles in a configuration is finite almost surely.   

The intensity $\nu(t) = 1/(1-t)^d$ corresponds, in principle, to a regularly spaced
collection of points since the expected number of points in a Whitney cube $Q$ 
of sidelength $\ell(Q)$ and centre $c(Q)$ is, in the case of this intensity,
\[
\E[N(Q)] = \int_Q d\mu(x) \sim \nu\big( c(Q) \big) {\rm Vol}(Q) 
= \frac{\ell(Q)^d}{(1-\vert c(Q)\vert)^d} \sim {\rm constant}.
\]
We note that there is agreement in principle between the integral condition \eqref{5} 
for the deterministic setting and the integral condition \eqref{7} with  
$\nu(t) = 1/(1-t)^d$ for the random setting.
\section{Avoidability, minimal thinness and a Wiener-type criterion}
\label{Sec2} 
Avoidability of a realised configuration of obstacles $A_\calP$ may be reinterpreted 
in terms of minimal thinness of $A_\calP$ at points on the boundary of the unit ball
(see \cite{Armitage} for a thorough account of minimal thinness).  
This is Lundh's original approach, 
and is also the approach adopted by the authors of \cite{Julie,JuliePhD,GG}.  

For a positive superharmonic function $u$ on $\B$ and a closed subset $A$
of $\B$, the reduced function $R_u^A$ is defined by
\[
R_u^A = \inf \big\{ v \colon v \text{ is positive and superharmonic on $\B$ 
and $v \geq u$ on }A \big\}.
\]
The set $A$ is minimally thin at $\tau\in\Sp$ if there is an $x$ in $\B$ at which 
the reduced function of the Poisson kernel $P(\cdot,\tau)$ for $\B$ with pole at $\tau$ 
satisfies $R_{P(\cdot,\tau)}^A(x) < P(x,\tau)$.
Minimal thinness in this context has been characterised in terms
of capacity by Ess\'en \cite{Essen} in dimension 2 and by Aikawa \cite{Aikawa} in higher dimensions. 
Let $\{Q_k\}_{k=1}^\infty$ be a Whitney decomposition of the ball $\B$ into cubes so that, in particular, 
\[
{\rm diam}(Q_k) \leq \text{dist}(Q_k,\Sp) \leq 4\, {\rm diam}(Q_k).
\]
Let $\ell(Q_k)$ be the sidelength of $Q_k$. 
Let $\cp(E)$ denote the Newtonian capacity of a Borel set $E$.  
Aikawa's criterion for minimal thinness of $A$ at a boundary point 
$\tau$ of  $\B$ is that the series $W(A,\tau)$ is convergent, where 
\beq\label{2.1}
W(A,\tau) = 
\sum_k \frac{\ell(Q_k)^2}{\rho_k(\tau)^d}\, \cp(A \cap Q_k),
\eeq
$\rho_k$ being the distance from $Q_k$ to the boundary point $\tau$.  
A proof of the following proposition can be found in \cite[Page 323]{GG}. 
The proof goes through with only very minor modifications even though
we do not insist on evaluating harmonic measure at the origin and 
the open set $\B\setminus A$ may not be connected. 
\begin{lemma}\label{Prop.mt}
Let $A$ be a closed subset of $\B$. Let
\beq\label{2.2}
\calM =\{\tau \in \Sp : A \text{ is minimally thin at }\tau\}.
\eeq
Then $A$ is avoidable if and only if $\calM$ has positive measure on $\Sp$, that is 
if and only if $W(A,\tau)<\infty$ for a set of $\tau$ of positive measure on $\Sp$.
\end{lemma}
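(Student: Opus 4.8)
The plan is to pass from avoidability of $A$ and from minimal thinness of $A$ at boundary points to statements about the reduced functions $R_1^A$ and $R_{P(\cdot,\tau)}^A$ on $\B$, and then to exploit that the first of these is the average of the second over $\Sp$. Write $R_u^A$ for the reduced function of $u$ over $A$ relative to $\B$ and $\hat R_u^A$ for its lower semicontinuous regularisation, which is superharmonic and, since $A$ is closed in $\B$, agrees with $R_u^A$ on $\B\setminus A$; as $R_u^A = u$ on $A$, the condition ``$R_{P(\cdot,\tau)}^A(x)<P(x,\tau)$ for some $x$'' forces that $x$ into $\B\setminus A$, so $A$ is minimally thin at $\tau$ exactly when $\hat R_{P(\cdot,\tau)}^A\not\equiv P(\cdot,\tau)$ on $\B$. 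Likewise the probabilistic reading $\hat R_1^A(x)=P_x(T_A<\tau_\B)$ gives $1-\hat R_1^A(x)=\omega(x,\Sp,\B\setminus A)$, so $A$ is avoidable precisely when $\hat R_1^A\not\equiv1$; since $\hat R_1^A$ equals $1$ on $A$ and is harmonic on $\B\setminus A$, the minimum principle applied on each component shows this happens if and only if $\hat R_1^A(x)<1$ for some $x$, with no connectedness of $\B\setminus A$ and no distinguished base point needed.

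The crux is the identity
\beq\label{keyid}
\hat R_1^A(x)=\int_\Sp \hat R_{P(\cdot,\tau)}^A(x)\,d\sigma(\tau),\qquad x\in\B,
\eeq
with $\sigma$ normalised surface measure on $\Sp$; combined with $1=\int_\Sp P(\cdot,\tau)\,d\sigma(\tau)$ it gives
\beq\label{keyid2}
1-\hat R_1^A(x)=\int_\Sp\big(P(x,\tau)-\hat R_{P(\cdot,\tau)}^A(x)\big)\,d\sigma(\tau),
\eeq
an integral of a nonnegative function. I would prove \eqref{keyid} probabilistically: conditioning Brownian motion from $x$ on its exit point $\tau$ from $\B$ produces the Doob $h$-process with $h=P(\cdot,\tau)$, whose probability of hitting $A$ before its lifetime is $\hat R_{P(\cdot,\tau)}^A(x)/P(x,\tau)$, and integrating this conditional probability against the harmonic measure $\omega(x,\cdot,\B)$ on $\Sp$, which has density $P(x,\cdot)$ with respect to $\sigma$, yields \eqref{keyid}. (Alternatively, use $\hat R_u^A(x)=\int u\,d\varepsilon_x^A$ for positive superharmonic $u$, where $\varepsilon_x^A$ is the balayage of $\delta_x$ onto $A$ in $\B$, and apply Tonelli's theorem to $u=1$ written as $\int_\Sp P(\cdot,\tau)\,d\sigma(\tau)$.) This is the main obstacle: it needs measurability of $(x,\tau)\mapsto\hat R_{P(\cdot,\tau)}^A(x)$ and some care because $P(\cdot,\tau)$ is unbounded near the polar point $\tau$, though this causes no genuine difficulty since $\hat R_{P(\cdot,\tau)}^A(x)\le P(x,\tau)<\infty$ for $x\ne\tau$.

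Granting \eqref{keyid2}, the equivalence is finished quickly. If $\sigma(\calM)=0$ then $\hat R_{P(\cdot,\tau)}^A=P(\cdot,\tau)$ for $\sigma$-a.e.\ $\tau$, so the integrand in \eqref{keyid2} vanishes for a.e.\ $\tau$ and $\hat R_1^A\equiv1$, i.e.\ $A$ is not avoidable. Conversely, if $\hat R_1^A\equiv1$, integrate \eqref{keyid2} over $\B$ and use Tonelli: for $\sigma$-a.e.\ $\tau$ the function $x\mapsto P(x,\tau)-\hat R_{P(\cdot,\tau)}^A(x)$ is zero for Lebesgue-a.e.\ $x$; being nonnegative and subharmonic on $\B$ (a harmonic function minus a superharmonic one), it is dominated by its spherical means and hence vanishes identically, so $A$ fails to be minimally thin at $\sigma$-a.e.\ $\tau$, i.e.\ $\sigma(\calM)=0$. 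Thus $A$ is avoidable if and only if $\sigma(\calM)>0$. Finally, Aikawa's criterion \eqref{2.1} identifies $\calM$ with $\{\tau\in\Sp: W(A,\tau)<\infty\}$, so this is precisely the statement that $W(A,\tau)<\infty$ on a set of positive surface measure. The argument runs parallel to the one in \cite[p.~323]{GG}; the only modifications for our setting — an arbitrary base point for harmonic measure and a possibly disconnected $\B\setminus A$ — are transparent once everything is phrased through the reduced function $\hat R_1^A$ on $\B$.
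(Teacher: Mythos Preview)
Your proposal is correct and is precisely the route the paper itself indicates: the paper does not supply its own proof of this lemma but refers to \cite[p.~323]{GG}, noting only that minor modifications are needed to accommodate an arbitrary base point and a possibly disconnected $\B\setminus A$. Your argument via the integral identity $\hat R_1^A=\int_\Sp \hat R_{P(\cdot,\tau)}^A\,d\sigma$, the nonnegativity of $P(\cdot,\tau)-\hat R_{P(\cdot,\tau)}^A$, and Aikawa's criterion is exactly that approach, and you have explicitly and correctly handled the two modifications the paper flags.
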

The question of whether a given set $A$ is avoidable for Brownian motion
is thereby reduced to an estimation of capacity. 

The following zero-one law simplifies the subsequent analysis, and will 
imply that the random archipelago is avoidable with probability zero or 
probability one, as stated in Theorem~1.
Again, $\tau$ is used to denote points on the sphere $\Sp$ and 
$A_\calP$ denotes an archipelago constructed as in \eqref{6} from 
a random realisation $\calP$ of points taken from the Poisson point process. 
\begin{lemma}\label{Lemma1}
The event that $A_\calP$ is minimally thin at $\tau$ has probability 0 or 1.  
\end{lemma}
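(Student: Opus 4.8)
The plan is to recognise the event $\{A_\calP \text{ is minimally thin at } \tau\}$ as a tail event for the Poisson point process, and then invoke a Kolmogorov-type zero-one law. By Aikawa's criterion, $A_\calP$ is minimally thin at $\tau$ if and only if the series $W(A_\calP,\tau) = \sum_k \ell(Q_k)^2 \rho_k(\tau)^{-d}\,\cp(A_\calP \cap Q_k)$ converges. First I would observe that convergence of a series of nonnegative terms is unaffected by altering finitely many of its terms; so the event depends only on the ``tail'' of the configuration, in a sense to be made precise. The natural way to organise this is to fix an exhaustion of $\B$ by compact sets $K_n \uparrow \B$ (for instance $K_n = \overline{B}(0,1-1/n)$), to note that the restriction of the Poisson process to $\B \setminus K_n$ is independent of its restriction to $K_n$, and to show that the event in question coincides up to a null event with an event measurable with respect to the process on $\B \setminus K_n$, for every $n$.

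The key step is the following localisation: the obstacles with centres in the fixed compact set $K_n$ are finite in number almost surely (since $\mu(K_n) < \infty$), each is a closed ball of radius $\phi(p) \le c(1-|p|)$ bounded away from $\Sp$, and each such obstacle meets only finitely many Whitney cubes $Q_k$. Hence changing the configuration inside $K_n$ changes $\cp(A_\calP \cap Q_k)$ for only finitely many indices $k$, and alters only finitely many terms of $W(A_\calP,\tau)$, leaving its convergence or divergence unchanged. Therefore the event $\{W(A_\calP,\tau) < \infty\}$ is, for every $n$, equal to an event in the $\sigma$-algebra $\mathcal F_n$ generated by the Poisson points in $\B \setminus K_n$. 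Consequently it lies in the tail $\sigma$-algebra $\bigcap_n \mathcal F_n$.

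It then remains to apply a zero-one law for this tail field. The $\sigma$-algebras $\mathcal F_n$ are generated by the independent family of Poisson restrictions to the disjoint annuli $K_{n+1}\setminus K_n$ (together with a ``first'' piece $K_1$), so Kolmogorov's zero-one law applies directly: every event in $\bigcap_n \mathcal F_n$ has probability $0$ or $1$. Applying this to $\{A_\calP \text{ is minimally thin at } \tau\} = \{W(A_\calP,\tau) < \infty\}$ gives the claim. I expect the only real point requiring care to be the localisation step — specifically, verifying that an obstacle centred in $K_n$ genuinely perturbs only finitely many capacity terms, which uses both the uniform bound $\phi(p)/(1-|p|) \le c < 1$ from \eqref{phi} (so obstacles stay away from $\Sp$ and have controlled size relative to the local Whitney scale) and the locally finite, scale-comparable nature of the Whitney decomposition; the probabilistic conclusion is then immediate.
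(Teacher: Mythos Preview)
Your argument is correct and is a perfectly natural way to establish the zero--one law: you identify the event $\{W(A_\calP,\tau)<\infty\}$ as lying in the tail $\sigma$-field of the Poisson process with respect to an exhaustion $K_n\uparrow\B$, the localisation step going through because assumption~\eqref{phi} forces every obstacle with centre in $K_n$ to sit inside the fixed compact set $\overline{B}\bigl(0,1-(1-c)/n\bigr)$, which meets only finitely many Whitney cubes. Kolmogorov's zero--one law, applied to the independent restrictions of $\calP$ to the annuli $K_{m+1}\setminus K_m$, then finishes the proof.

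The paper takes a different route. Rather than exhausting $\B$, it partitions the Whitney cubes into finitely many groups $\{Q^i_k\}_k$, $i=1,\dots,n$, chosen so that no single obstacle can meet two cubes in the same group; this makes the terms $X^i_k=\ell(Q^i_k)^2\rho^i_k(\tau)^{-d}\cp(A_\calP\cap Q^i_k)$ within each group mutually independent (they are measurable with respect to the Poisson process on disjoint regions), and Kolmogorov's zero--one law is applied to each sub-series $\sum_k X^i_k$ directly. Your approach is arguably the more standard ``tail event for a Poisson process'' argument and is slightly cleaner for this lemma in isolation. The paper's decomposition, however, pays a second dividend: the same independent families $\{X^i_k\}_k$ are reused in the proof of Lemma~\ref{Lemma4.1}, where Kolmogorov's three-series theorem (which needs independence and uniform boundedness of the summands) is invoked to pass from almost-sure convergence of $W(A_\calP,\tau)$ back to finiteness of its expectation. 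Your exhaustion does not supply that independent structure, so if you follow your route here you will still need something like the paper's cube-grouping when you reach Lemma~\ref{Lemma4.1}.
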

\begin{proof}
Whether or not the set $A_\calP$ is minimally thin at $\tau$ depends on 
the convergence of the series $W(A_\calP,\tau)$. 
Partition the cubes $\{Q_k\}_1^\infty$ into finitely many disjoint groups 
$\{Q^i_k\}_{k=1}^\infty$, $i=1$, 2, $\ldots$, $n$, so that any ball 
in $A_\calP$ can meet at most one cube in each group. 
Then break the summation $W(A_\calP,\tau)$ into corresponding summations 
\beq\label{Xk}
W^i(A_\calP,\tau) = \sum_{k=1}^\infty X^i_k \quad \mbox{where }
X_k = \frac{\ell(Q_k)^2}{\rho_k(\tau)^d}\, \cp(A \cap Q_k)
\eeq
The random variables $X^i_k$ in each resulting summation are independent. 
The event $W^i(A_\calP,\tau)< \infty$ belongs to the tail field of the 
corresponding $X^i_k$'s, hence this event has probability 0 or 1. 
It follows that the event $W(A_\calP,\tau)< \infty$ has probability 0 or 1. 
\end{proof}
\section{The expected value of the Wiener-type criterion and the Poisson balayage}
\label{Sec3} 
The proof of Theorem~\ref{thm1} follows the outline of Lundh's argument \cite{Lundh} 
and the second author's thesis \cite{JuliePhD}. 

We work with a Poisson point process in the ball. Each realisation $\calP$ of this process 
gives rise to an archipelago $\calAp$ via \eqref{6}, which is avoidable for 
Brownian motion if and only if the associated Wiener-type series $W(\calAp,\tau)$ 
is finite for a set of $\tau$ of positive measure on the sphere $\Sp$. 
For a fixed $\tau$ on the sphere $\Sp$, the series $W(\calAp,\tau)$ is a random variable. 
Proposition~\ref{prop1} states that its expected value is comparable to the Poisson 
balayage \eqref{7*}. 
We denote by $c$ and $C$ any positive finite numbers whose values depend 
only on dimension and are immaterial to the main argument. 
%
%
\begin{proposition}\label{prop1} 
Fix a point $\tau$ on the sphere $\Sp$. Then 
\beq\label{EsimBal}
\E\big[W(\calAp,\tau)\big]
\sim
\int_\B \frac{(1-\vert x\vert^2)^2}{\vert \tau - x \vert^d} \, 
	\phi(x)^{d-2}\,\nu(x)\,dx.
\eeq
\end{proposition}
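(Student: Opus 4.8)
The plan is to discretise both sides of \eqref{EsimBal} over a Whitney decomposition $\{Q_k\}$ and reduce everything to a single estimate for the expected capacity of the archipelago inside one cube. Write $\phi_k=\phi(c(Q_k))$, $\nu_k=\nu(c(Q_k))$ for the values at the centre. The Whitney property $\operatorname{diam}(Q_k)\le\operatorname{dist}(Q_k,\Sp)\le\rho_k(\tau)$ gives $1-|x|\sim\ell(Q_k)$ and $|x-\tau|\sim\rho_k(\tau)$ for $x\in Q_k$, while \eqref{phinu} applies throughout $Q_k$ (a Whitney cube lies in $B(c(Q_k),(1-|c(Q_k)|)/2)$) and gives $\phi(x)\sim\phi_k$, $\nu(x)\sim\nu_k$ there. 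Hence the integral in \eqref{EsimBal} is comparable to $\sum_k\ell(Q_k)^2\rho_k(\tau)^{-d}\,\phi_k^{d-2}\nu_k\,\ell(Q_k)^d$, whereas $\E[W(\calAp,\tau)]=\sum_k\ell(Q_k)^2\rho_k(\tau)^{-d}\,\E[\cp(\calAp\cap Q_k)]$ by \eqref{2.1} and linearity. Since the expected number of Poisson points in $Q_k$ is $\E[N_k]:=\mu(Q_k)=\int_{Q_k}\nu\sim\nu_k\ell(Q_k)^d$, the proposition follows once we establish, for every $k$,
\[
\E\big[\cp(\calAp\cap Q_k)\big]\ \sim\ \phi_k^{\,d-2}\,\E[N_k].
\]

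For the upper bound I would use subadditivity of Newtonian capacity together with $\cp(\overline B(0,r))=c_d r^{d-2}$: the set $\calAp\cap Q_k$ is covered by the balls $\overline B(p,\phi(p))$ over those $p\in\calP$ whose ball meets $Q_k$, and by \eqref{phi} such a ball can reach $Q_k$ only from distance $\phi(p)\le c(1-|p|)\lesssim\ell(Q_k)$, so all these $p$ lie in a fixed dilate $Q_k^{\ast}$ of $Q_k$ on which $\phi$ and $\nu$ are still comparable to $\phi_k,\nu_k$ (finitely many applications of \eqref{phinu}). Thus $\cp(\calAp\cap Q_k)\le c_d\sum_{p\in\calP\cap Q_k^{\ast}}\phi(p)^{d-2}\lesssim\phi_k^{d-2}\,\#(\calP\cap Q_k^{\ast})$, and taking expectations via $\E\sum_{p\in\calP}f(p)=\int f\,d\mu$ yields $\E[\cp(\calAp\cap Q_k)]\lesssim\phi_k^{d-2}\mu(Q_k^{\ast})\sim\phi_k^{d-2}\E[N_k]$.

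The matching lower bound is where the real work lies. If $\E[N_k]\le1$ it is easy: retaining a single Poisson point $p\in Q_k$, the set $\overline B(p,\phi(p))\cap Q_k$ still has capacity $\gtrsim\phi_k^{d-2}$, since intersecting the ball with the convex cube keeps a solid-angle sector of it, whence $\E[\cp(\calAp\cap Q_k)]\gtrsim\phi_k^{d-2}\,\mathbb P(N_k\ge1)=\phi_k^{d-2}(1-e^{-\E[N_k]})\gtrsim\phi_k^{d-2}\E[N_k]$. If $\E[N_k]>1$ I would split $Q_k$ into $m\sim\min\{\E[N_k],\,c_0(\ell(Q_k)/\phi_k)^{d-2}\}$ congruent subcubes, $c_0$ a small dimensional constant. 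Here \eqref{nu} gives $\E[N_k]\sim\nu_k\ell(Q_k)^d\lesssim(\ell(Q_k)/\phi_k)^{d-2}$, so the cap costs at most a bounded factor, and it forces each subcube to have side $\gg\phi_k$ while still carrying Poisson mass $\gtrsim1$, hence containing a point with probability bounded below. Passing to a checkerboard subfamily of $\gtrsim m$ subcubes, pairwise separated by at least their common side, one puts the equilibrium measure $\mu_j$ on each corresponding intersected ball; using the separation of the well-spaced subcube centres via a dyadic shell count, together with $\|\mu_j\|=\cp(\overline B(p_j,\phi(p_j))\cap Q_k)\lesssim\phi_k^{d-2}$, one checks that $\sum_j U^{\mu_j}\lesssim m(\phi_k/\ell(Q_k))^{d-2}\le c_0$ on the union, which is $\le1/2$ for $c_0$ small. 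The inequality $\cp(E)\ge\nu(E)/\sup_E U^{\nu}$ then gives $\cp(\calAp\cap Q_k)\gtrsim\phi_k^{d-2}\cdot\#\{\text{occupied subcubes}\}$, and taking expectations yields $\E[\cp(\calAp\cap Q_k)]\gtrsim\phi_k^{d-2}m\gtrsim\phi_k^{d-2}\E[N_k]$; in the range where the cap on $m$ is active one instead compares both sides to $\cp(Q_k)\sim\ell(Q_k)^{d-2}$, again invoking \eqref{nu}. Summing the cube-by-cube estimates finishes the proof.

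The main obstacle is precisely this lower bound in the crowded regime $\E[N_k]>1$: one must show that throwing many random balls of radius $\sim\phi_k$ into $Q_k$ produces a union whose capacity still grows \emph{linearly} in the number of balls rather than saturating at $\cp(Q_k)\sim\ell(Q_k)^{d-2}$, and it is exactly the density hypothesis \eqref{nu} that rules out premature saturation and keeps the potential estimate $\sum_j U^{\mu_j}\le1/2$ within reach.
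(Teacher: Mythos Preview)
Your proposal is correct and follows the same overall architecture as the paper: discretise both sides over the Whitney decomposition and reduce Proposition~\ref{prop1} to the single-cube estimate $\E[\cp(\calAp\cap Q_k)]\sim\phi_k^{d-2}\mu(Q_k)$ (this is exactly the paper's Lemma~\ref{Lemma3}), with the upper bound via subadditivity over a bounded enlargement of $Q_k$ identical in both treatments. The one genuine difference is in the lower bound. The paper quotes the Aikawa--Borichev quasi-additivity lemma \eqref{3.3} as a black box: it partitions $Q$ into $N^d\sim(\ell(Q)/\phi_0)^{d-2}$ subcubes, chosen precisely so that after a rescaling the separation hypothesis of that lemma is met, and then uses \eqref{nu} only to ensure $\mu(Q'')=O(1)$ so that $1-e^{-\mu(Q'')}\gtrsim\mu(Q'')$. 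You instead partition into $m\sim\E[N_k]$ subcubes and prove the needed super-additivity by hand, summing equilibrium potentials via a dyadic shell count to get $\sum_jU^{\mu_j}\lesssim m(\phi_k/\ell(Q_k))^{d-2}$; your use of \eqref{nu} is to bound $m\lesssim(\ell(Q_k)/\phi_k)^{d-2}$ and hence keep this sum below $1/2$. The two arguments are really the same estimate viewed from opposite ends---your potential bound is essentially an inline proof of the Aikawa--Borichev inequality for equal-radius balls---so the paper's version is shorter by outsourcing, while yours is more self-contained and makes the role of \eqref{nu} (preventing capacity saturation at $\cp(Q_k)$) more transparent.
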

The proof of Proposition~\ref{prop1} depends on a two-sided estimate for 
the expected value of the capacity of the intersection of a Whitney cube 
$Q_k$ with the set of obstacles $\calAp$  in terms of the mean measure 
$\mu(Q_k)$ of the cube and a typical value of the radius function $\phi$ on the cube.
%
%
\begin{lemma}\label{Lemma3}
For a Whitney cube $Q$ and any point $x \in Q$, 
\beq\label{4.2}
\E[\cp (\calAp\cap Q)]  \sim  \phi(x)^{d-2}\, \mu(Q).
\eeq
\end{lemma}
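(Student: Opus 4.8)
The plan is to fix a Whitney cube $Q$ and a point $x\in Q$, write $r=\phi(x)$ and $m=\mu(Q)$, and reduce \eqref{4.2} to a potential-theoretic estimate for a union of balls of radius comparable to $r$. First I would carry out some routine reductions. Iterating the doubling hypothesis \eqref{phinu} along a short chain of balls shows that $\phi$ and $\nu$ are comparable to $r$ and to $\nu(x)$ on any fixed dilate $Q^{\ast}$ of $Q$ (the dilates we need have diameter comparable to $1-|x|$, by \eqref{phi} and the Whitney property), so $\mu(Q^{\ast})\sim\mu(Q)=m\sim\nu(x)\,\ell(Q)^{d}$. I would also record at the outset the consequence of \eqref{nu} that the obstacles are \emph{dilute}, in the sense that
\[
r^{d-2}\,m\;\sim\;\phi(x)^{d-2}\,\nu(x)\,\ell(Q)^{d}\;\leq\;C\,\ell(Q)^{d-2}\;\sim\;C\,\cp(Q);
\]
this inequality is used repeatedly. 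I also use throughout that $\cp\bigl(\overline B(y,\rho)\bigr)=c_{d}\,\rho^{d-2}$ and $\cp(Q)\sim\ell(Q)^{d-2}$.

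For the upper bound, $\calAp\cap Q$ is contained in the union of those obstacle balls $\overline B(p,\phi(p))$ that meet $Q$, and the centres of such balls all lie in a fixed dilate $Q^{\ast}$; so by countable subadditivity of Newtonian capacity and the mean-value (Campbell) formula for the Poisson process,
\[
\E\bigl[\cp(\calAp\cap Q)\bigr]\;\leq\;\E\sum_{p\in\calP\cap Q^{\ast}}\cp\bigl(\overline B(p,\phi(p))\bigr)=\int_{Q^{\ast}}c_{d}\,\phi(y)^{d-2}\,d\mu(y)\;\leq\;C\,r^{d-2}\,\mu(Q^{\ast})\;\leq\;C\,\phi(x)^{d-2}\,\mu(Q).
\]
This holds with no restriction on $m$ and gives one half of \eqref{4.2}.

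For the reverse inequality I would distinguish two regimes. If $m\leq1$, or if $\phi(x)$ is comparable to $\ell(Q)$ (which by the dilute bound forces $m\leq C$), a single obstacle already suffices: on the event $\{N(\tfrac12Q)\geq1\}$, which has probability $1-e^{-\mu(\frac12Q)}\sim m$ since $m$ is bounded, any point $p\in\calP\cap\tfrac12Q$ produces $\calAp\cap Q\supseteq\overline B\bigl(p,\min(\phi(p),\tfrac14\ell(Q))\bigr)$, of capacity at least $c\,\phi(x)^{d-2}$, so $\E[\cp(\calAp\cap Q)]\geq c\,\phi(x)^{d-2}\,m$. In the remaining regime $m>1$ and $\phi(x)<\eta\,\ell(Q)$ — with $\eta$ a small constant, depending only on the constant in \eqref{phinu} and the dimension, chosen so that every obstacle ball with centre in $\tfrac12Q$ lies inside $Q$ — I would condition on $\{N(\tfrac12Q)=j\}$, so that the $j$ centres are i.i.d.\ uniform in $\tfrac12Q$. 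Putting on each ball $\overline B(p_{i},\phi(p_{i}))$ a probability measure $\sigma_{i}$ of Newtonian energy at most $C\,r^{-(d-2)}$ (its equilibrium measure, say) and testing with $\sigma=\tfrac1j\sum_{i}\sigma_{i}$ gives $\cp(\calAp\cap Q)\geq 1/I(\sigma)$, where $I$ denotes energy. The diagonal part of $I(\sigma)$ is at most $C\,(j\,r^{d-2})^{-1}$, while, on taking expectations over the centre locations, the off-diagonal part is at most $C\,\ell(Q)^{-(d-2)}$ — using $\int_{\frac12Q}|p_{1}-y|^{-(d-2)}\,dy\leq C\,\ell(Q)^{2}$ and that a pair of centres lies within distance $r$ with probability at most $C(r/\ell(Q))^{d}$. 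The dilute bound is exactly what makes the diagonal dominate once $j\leq 2m$, so $\E[I(\sigma)\mid N(\tfrac12Q)=j]\leq C\,(j\,r^{d-2})^{-1}$; convexity of $t\mapsto1/t$ then gives $\E[\cp(\calAp\cap Q)\mid N(\tfrac12Q)=j]\geq c\,j\,r^{d-2}$ for $j\leq2m$, and summing against the Poisson weights (discarding $j>2m$ by a standard tail bound) yields $\E[\cp(\calAp\cap Q)]\geq c\,r^{d-2}\,\E\bigl[N(\tfrac12Q)\,\mathbf{1}_{\{N(\frac12Q)\leq2m\}}\bigr]\geq c\,\phi(x)^{d-2}\,m$.

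The main obstacle is this last step — proving that, in expectation, the capacity of the union of the random obstacle balls inside $\tfrac12Q$ is comparable to the sum of their individual capacities. Newtonian capacity is far from additive in general, and the content of the estimate is that hypothesis \eqref{nu}, via the dilute bound, keeps the obstacles sparse enough that the self-energy of the test measure dominates the mutual interaction energy. Everything else — the randomness of $N(\tfrac12Q)$, the passage from $\E[1/I(\sigma)]$ to $1/\E[I(\sigma)]$ by Jensen's inequality, and the separation into the cases $\phi(x)\sim\ell(Q)$ and $\phi(x)\ll\ell(Q)$ dictated by \eqref{phi} and \eqref{nu} — is bookkeeping.
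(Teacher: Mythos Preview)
Your proof is correct. The upper bound is essentially the paper's: subadditivity of capacity plus $\E[N(Q^{\ast})]=\mu(Q^{\ast})$ (what you call the Campbell formula). The lower bound, however, is genuinely different. The paper imposes separation \emph{deterministically}: it partitions $Q$ into $N^{d}$ congruent subcubes, keeps at most one Poisson point per subcube, and then invokes the Aikawa--Borichev quasi-superadditivity estimate \eqref{3.3} to get the pointwise bound $\cp(\calAp\cap Q)\geq c\,\phi_{0}^{d-2}\,X$, where $X$ is the number of occupied subcubes; only then does randomness enter, via $\E[X]=\sum(1-e^{-\mu(Q'')})\geq c\,\mu(Q)$, with \eqref{nu} used to ensure each $\mu(Q'')$ is bounded. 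You instead keep all the Poisson points in $\tfrac12Q$ and use the elementary energy bound $\cp\geq 1/I(\sigma)$ with $\sigma$ the average of the equilibrium measures; separation is replaced by the observation that, \emph{on average} over the i.i.d.\ centre locations, the mutual energy is $O(\ell(Q)^{2-d})$, which the dilute bound $r^{d-2}m\leq C\,\ell(Q)^{d-2}$ makes smaller than the diagonal $C/(j\,r^{d-2})$ once $j\leq 2m$; Jensen then converts $1/\E[I(\sigma)]$ into a lower bound for $\E[\cp]$. Your route is more self-contained (no external superadditivity lemma) but yields only a bound in expectation, whereas the paper's grid argument gives a pointwise capacity bound in terms of the occupancy count $X$ --- a stronger intermediate statement, though not one the paper actually exploits elsewhere.
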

Lundh did not require an estimate of this type as the size of one of his obstacles 
was comparable to the size of the Whitney cube containing its centre. 
The capacity of $\calAp\cap Q$ therefore depended only on the 
probability of whether of not the cube $Q$ contained a point 
from the Poisson point process. 
We first deduce Proposition~\ref{prop1} from Lemma~\ref{Lemma3} and 
then prove Lemma~\ref{Lemma3}.

\begin{proof}[Proof of Proposition~\ref{prop1}]
The upper bound for $\E[\cp (\calAp\cap Q)]$ in Lemma~\ref{Lemma3} 
leads to an upper bound for the expected value of Aikawa's series 
\eqref{2.1} with $A = A_\calP$ as follows:
\begin{align*}
\E[W(A_\calP,\tau)]& = \E\left[\sum_k \frac{\ell(Q_k)^2}{\rho_k(\tau)^d}
\,\cp(A_\calP\cap Q_k)\right] \\
&=
\sum_k\frac{\ell(Q_k)^2}{\rho_k(\tau)^d} \,\E[\cp (A_\calP\cap Q_k)] \\ 
&\leq 
C\, \sum_k   \frac{\ell(Q_k)^2}{\rho_k(\tau)^d}\,  \phi( x_k )^{d-2}\, \mu(Q_k) 
\end{align*}
where $x_k$ is any point in $Q_k$. 
Since the radius function $\phi$ is approximately constant on each Whitney cube by 
\eqref{phinu}, it follows that
\begin{align*}
\E[W(A_\calP,\tau)] &\leq 
C\, \sum_k \int_{Q_k} \frac{(1-\vert x\vert^2)^2}{\vert \tau - x \vert^d} \, 
	\phi(x)^{d-2}\,\nu(x)\,dx\\
& = 
C \int_\B \frac{(1-\vert x\vert^2)^2}{\vert \tau - x \vert^d} \, 
	\phi(x)^{d-2}\,\nu(x)\,dx.
\end{align*}

In the other direction, first choose a point $x_k$ in each Whitney cube $Q_k$.
Then, 
\begin{align*}
\int_\B \frac{(1-\vert x\vert^2)^2}{\vert \tau - x \vert^d}\, 
	\phi(x)^{d-2}\,\nu(x)\,dx
& \leq C 
\sum_k  \frac{\ell(Q_k)^2}{\rho_k(\tau)^d}\,  \phi( x_k )^{d-2}\, \mu(Q_k) \\
&\leq C
\sum_k\frac{\ell(Q_k)^2}{\rho_k(\tau)^d} \,\E[\cp (A_\calP\cap Q_k)] \\
& = C\, \E[W(A_\calP,\tau)],
\end{align*}
where the second inequality comes from the lower bound for 
$\E[\cp (A_\calP\cap Q_k)]$ in Lemma~\ref{Lemma3}.
\end{proof}
\begin{proof}[Proof of Lemma~\ref{Lemma3}]

The assumption \eqref{phi} implies that if an obstacle meets a Whitney cube $Q$ 
then its centre can lie in at most some fixed number $N$ of Whitney cubes 
neighbouring the cube $Q$. 
We label these cubes $Q^i$, where the index $i$ varies from 1 to at most $N$, 
and write $Q'$ for their union.  
Both the distance to the boundary, and the distance to a specific boundary point, 
are comparable in $Q$ and in $Q'$. 
Analogously, an obstacle with a centre in a specified cube can intersect 
at most some fixed number of neighbouring cubes. 

Consider a random realisation of points $\calP$ and a Whitney cube $Q_k$. 
By \eqref{phinu} the radius function $\phi$ is roughly constant on the cubes $Q_k^i$, 
say $\phi(x) \sim \phi( x_k)$, $x\in Q_k'$, where $x_k$ is any point chosen in $Q_k$.
Therefore, by the subadditivity property of capacity, 
\[
\cp(A_{\calP}\cap Q_k) \leq C\,  \phi(x_k)^{d-2}\,N(Q'_k),
\]
where $N(Q'_k)$ is the number of centres from the realised point process 
$\calP$ that lie in the union of cubes $Q'_k$. Taking the expectation leads to
\[ 
\E[\cp (\calAp\cap Q_k)]  \leq C\,  \phi(x_k )^{d-2} \E\left[N(Q'_k)\right]
 = C \,  \phi(x_k )^{d-2} \mu(Q'_k).
\]
By \eqref{phinu}, $\mu(Q'_k) \leq C \mu(Q_k)$ and the upper bound for 
$\E[\cp (\calAp\cap Q_k)]$ in Lemma~\ref{Lemma3} follows. 

In the other direction we proceed, as did Gardiner and Ghergu \cite{GG}, 
by employing the following super-additivity property of capacity 
due to Aikawa and Borichev \cite{AB}. 
Let $\sigma_d $ be the volume of the unit ball.
Let $F = \bigcup B(y_k,\rho_k)$ be a union of balls which lie inside some ball 
of unit radius. 
Suppose also that $\rho_k \leq 1/\sqrt{\sigma_d 2^d}$ for each $k$ 
and that the larger balls $B(y_k, \sigma_d^{-1/d}\rho_k^{1-2/d})$ are disjoint. 
Then
\beq\label{3.3}
\cp(F) \geq c\sum_k \cp \big( B(y_k,\rho_k) \big) = c \sum_k \rho_k^{d-2}. 
\eeq

Let $\phi_0$ be the minimum of $\phi(x)$ for $x$ in $Q$. 
By \eqref{phinu}, $\phi_0$ is comparable to $\phi(x)$ for any $x$ in $Q$. 
We only consider obstacles with centres in $Q$ and suppose that all such obstacles  
have radius $\phi_0$, since in so doing the capacity of $\calAp\cap Q$ decreases.
Set 
\[
\alpha = \min\big\{ \big( \ell(Q)\sqrt{d}\big)^{-1}, 
\big( \sqrt{\sigma_d\,2^d}\, \phi_0\big)^{-1} \big\} 
\]
and set 
\[
N  = \lfloor 4^{-1}\, \ell(Q)\, \sigma_d^{1/d}\, 
\alpha^{2/d}\, \phi_0^{2/d-1}\rfloor. 
\]
By \eqref{phi}, we have $\alpha \geq c/\ell(Q)$.
The cube $Q$ is divided into $N^d$ smaller cubes each of sidelength $\ell(Q)/N$:
we write $Q'$ for a typical sub-cube. 
Inside each cube $Q'$ consider a smaller concentric cube $Q''$ of 
sidelength $\ell(Q)/(4N)$. 
If a cube $Q''$ happens to contain points from the realisation $\calP$ 
of the random point process, we choose one such point.
This results in points $\lambda_1$, $\lambda_2$, $\ldots$, $\lambda_m$, 
say, where $m \leq N^d$. 
By the choice of $\alpha$ and $N$, each ball $B(\lambda_k, \phi_0)$ 
is contained within the sub-cube $Q'$ that contains its centre. 
We set 
\[
A_{\mathcal{P}, Q} = \bigcup_{k=1}^m \overline{B}(\lambda_k, \phi_0). 
\]
Since $A_{\mathcal{P}, Q} \subset \calAp\cap Q$, if follows from 
monotonicity of capacity that 
$\cp\big( \calAp\cap Q \big) \geq \cp\big( A_{\mathcal{P}, Q}\big)$. 

To estimate the capacity of $A_{\mathcal{P}, Q}$, we scale the cube $Q$ by 
$\alpha$. 
By the choice of $\alpha$, the cube $\alpha Q$ lies inside a ball of unit radius
and the radius of each scaled ball from $A_{\mathcal{P}, Q}$ satisfies
$\alpha \phi_0 \leq (\sigma_d 2^d)^{-1/2}$. 
The only condition that remains to be checked before applying Borichev and Aikawa's
estimate \eqref{3.3} to the union of balls $\alpha A_{\mathcal{P}, Q}$ is that 
the balls with centre $\alpha \lambda_k$ and radius 
$\sigma_d^{-1/d}(\alpha \phi_0)^{1-2/d}$ are disjoint. 
They are if 
\[
2 \sigma_d^{-1/d}(\alpha\,\phi_0)^{1-2/d} \leq \frac{\alpha\, \ell(Q)}{2N}
\]
since the centres of the balls are at least a distance $\alpha\, \ell(Q)/(2N)$ apart. 
This inequality follows from the choice of $N$. 
Applying \eqref{3.3} and the scaling law for capacity yields 
\[
\cp\big( A_{\mathcal{P}, Q}\big) 
= \alpha^{2-d}\cp\big(\alpha A_{\mathcal{P}, Q}\big) 
\geq 
\alpha^{2-d}\, c\, X\, (\alpha\phi_0)^{d-2} = 
c\,X\,\phi_0^{d-2},
\]
where $X= m$ is the number of sub-cubes $Q''$ of $Q$ in our construction 
that contain at least one point of $\mathcal{P}$. Hence, 
\beq\label{3.4}
\E[\cp (\calAp\cap Q)]  \geq c\, \phi_0^{d-2}\, \E[X].
\eeq
The probability that a particular sub-cube $Q''$ contains a point of $\mathcal{P}$ 
is 
\[
1-\mathbb{P}\big( \mathcal{P} \cap Q'' = \emptyset\big) 
= 1-e^{ - \mu(Q'')},
\]
by the Poisson nature of the random point process. 
For any sub-cube $Q''$ with centre $x$, say,
\begin{align*}
\mu(Q'') & \sim \nu(x) \left( \frac{\ell(Q)}{N} \right)^d
 \sim \nu(x) \frac{\phi_0^{d-2}}{\alpha^2}& \mbox{(by choice of $N$)}\\
&  \leq \nu(x)\, \ell(Q)^2\, \phi_0^{d-2} &  \mbox{(since $\alpha \geq c/l(Q)$)}\\
& = O(1) & \mbox{(by \eqref{nu})}. 
\end{align*} 
It then follows that 
\beq\label{3.5}
\E [X] = \sum_{Q''\subset Q} 1-e^{ - \mu(Q'')} \geq \sum_{Q''\subset Q} \mu(Q'') \geq c \mu(Q),
\eeq
the last inequality being a consequence of the assumption \eqref{phinu}
and the fact that the volume of the union of the cubes $Q''$ is some fixed 
fraction of the volume of $Q$.
When combined with \eqref{3.4}, the estimate \eqref{3.5} yields the lower bound for 
$\E[\cp (\calAp\cap Q)]$. 
\end {proof}
%
%
\section{Proof of Theorem~\ref{thm1}} 
\label{Sec4}
To begin with, we need the following result from Lundh's paper \cite{Lundh}.
\begin{lemma}\label{Lemma4.1} 
Let $\tau\in \Sp$. Then $\E[W(\calAp,\tau)]$ is finite if and only if 
the series $W(\calAp,\tau)$ is convergent for almost all random configurations $\calP$.
\end{lemma}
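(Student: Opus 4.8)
The plan is to prove Lemma~\ref{Lemma4.1} by combining the zero--one law of Lemma~\ref{Lemma1} with a Borel--Cantelli / Kolmogorov-type argument applied to the (almost) independent summands $X_k$ of the Wiener-type series $W(\calAp,\tau)$. The point is that expectation-finiteness and almost-sure convergence coincide for sums of nonnegative, essentially independent random variables with uniformly bounded per-term contributions, and the structure set up in the proof of Lemma~\ref{Lemma1} supplies exactly that independence after partitioning the Whitney cubes into finitely many groups.

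First I would recall the decomposition from the proof of Lemma~\ref{Lemma1}: partition $\{Q_k\}$ into finitely many groups $\{Q_k^i\}$, $i=1,\dots,n$, so that any single obstacle ball meets at most one cube in each group; then $W(\calAp,\tau)=\sum_{i=1}^n W^i(\calAp,\tau)$ with $W^i(\calAp,\tau)=\sum_k X_k^i$, and within each group the $X_k^i$ are independent nonnegative random variables. Since $n$ is finite, $W(\calAp,\tau)<\infty$ a.s.\ if and only if each $W^i(\calAp,\tau)<\infty$ a.s., and $\E[W(\calAp,\tau)]<\infty$ if and only if each $\E[W^i(\calAp,\tau)]<\infty$; so it suffices to treat one group, i.e.\ a sum $S=\sum_k X_k$ of independent nonnegative $X_k$. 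One direction is immediate and holds without independence: if $\E[S]=\sum_k\E[X_k]<\infty$ then by the monotone convergence theorem (or Tonelli) $S<\infty$ a.s. For the converse I would use the zero--one law: by Lemma~\ref{Lemma1} the event $\{S<\infty\}$ has probability $0$ or $1$, so if it fails to be a.s.\ convergent it is a.s.\ divergent, and I must show this forces $\E[S]=\infty$. Equivalently, I show $\sum_k\E[X_k]<\infty\implies S<\infty$ a.s., which is the easy direction again, so what actually needs the independence is: $\sum_k\E[X_k]=\infty\implies S=\infty$ a.s.

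For that implication the key fact is the uniform boundedness of the individual terms. Each $X_k=\ell(Q_k)^2\rho_k(\tau)^{-d}\cp(\calAp\cap Q_k)$; by the subadditivity estimate in the proof of Lemma~\ref{Lemma3}, $\cp(\calAp\cap Q_k)\le C\phi(x_k)^{d-2}N(Q_k')$, and although $N(Q_k')$ is unbounded, one may instead bound $\cp(\calAp\cap Q_k)\le\cp(Q_k)\sim\ell(Q_k)^{d-2}$ by monotonicity since $\calAp\cap Q_k\subset Q_k$; hence $X_k\le C\,\ell(Q_k)^d/\rho_k(\tau)^d\le C$ because $\ell(Q_k)\le\rho_k(\tau)$ for Whitney cubes away from $\tau$ (and the finitely many cubes near $\tau$ contribute a bounded amount and can be discarded). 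Thus we have independent $X_k$ with $0\le X_k\le C$. For such a sum, a standard second-moment / truncation argument (or Kolmogorov's three-series theorem) gives the dichotomy: $\sum_k\E[X_k]<\infty\iff S<\infty$ a.s. Concretely, if $\sum_k\E[X_k]=\infty$, then since $X_k\le C$ one has $\mathrm{Var}(X_k)\le C\,\E[X_k]$, so $\sum_k\mathrm{Var}(X_k)\le C\sum_k\E[X_k]$; splitting into the case $\sum\mathrm{Var}(X_k)<\infty$ (where $S-\E[S]$ converges a.s.\ by Kolmogorov, forcing $S=\infty$ a.s.\ since $\E[S]=\infty$) and $\sum\mathrm{Var}(X_k)=\infty$ (where the Kolmogorov-type converse, again via boundedness, yields $S=\infty$ a.s.) completes the argument. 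Summing over the $n$ groups then gives the statement for $W(\calAp,\tau)$.

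The main obstacle is the bookkeeping needed to make the per-term boundedness clean: one must verify that the Whitney geometry really gives $\ell(Q_k)\lesssim\rho_k(\tau)$ uniformly (true for all but finitely many cubes, those being the ones closest to $\tau$, whose total contribution is trivially finite and irrelevant to convergence), and one must be a little careful that the independence within a group is genuine — the obstacles are generated by a single Poisson process, but since each obstacle touches at most one cube of a given group, the random variables $\cp(\calAp\cap Q_k^i)$ for fixed $i$ depend on disjoint portions of the point process and are therefore independent by the independence-over-disjoint-sets property of Poisson processes. Once these two points are in place, the probabilistic core is the classical three-series theorem and is entirely routine.
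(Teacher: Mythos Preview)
Your approach is correct and essentially the same as the paper's: decompose $W(\calAp,\tau)$ into finitely many sums of independent, uniformly bounded nonnegative terms (exactly as in the proof of Lemma~\ref{Lemma1}) and invoke Kolmogorov's three-series theorem to conclude that almost-sure convergence forces $\sum_k\E[X_k]<\infty$. The paper merely asserts the uniform bound on $X_k$ while you supply the justification via $\cp(\calAp\cap Q_k)\le\cp(Q_k)\sim\ell(Q_k)^{d-2}$; note, however, that $\ell(Q_k)\lesssim\mathrm{dist}(Q_k,\Sp)\le\rho_k(\tau)$ holds for \emph{every} Whitney cube (so no cubes need be set aside), and your appeal to the zero--one law is superfluous since the hypothesis already provides almost-sure convergence.
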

\begin{proof}
It is clear that $\E[W(\calAp,\tau)]$ being finite implies that $W(\calAp,\tau)$ 
is almost surely convergent. 
The reverse direction is proved by Lundh \cite[p.\ 241]{Lundh} using 
Kolmogorov's three series theorem. 
Indeed, it is a consequence of this result \cite[p.\ 118]{Chung} that, 
in the case of a uniformly bounded sequence of non-negative independent 
random variables, the series $\sum_k X_k$ converges almost surely 
if and only if $\sum_k \E [X_k]$ is finite.
As in the proof of Lemma~\ref{Lemma1}, the series $W(\calAp,\tau)$ 
is split into $n$ series $W^i(\calAp,\tau) = \sum_{k=1}^\infty X^i_k$, 
each of which is almost surely convergent by assumption. 
The random variables $X_k$ in \eqref{Xk} are uniformly bounded.  
It then follows that $\sum_k \E [X_k^i] = \E \big[ \sum_k X^i_k\big]$ is convergent,
that is $\E[W^i(\calAp,\tau)]$ is finite. Summing over $i$, we find that 
$\E[W(\calAp,\tau)]$ is finite as claimed.
\end{proof}
\begin{proof}[Proof of Theorem~\ref{thm1}]
Let us first assume that the finite Poisson balayage condition \eqref{7*} holds 
for all $\tau$ in a set $T$, say, of positive measure $\sigma(T)$ 
on the boundary of the unit ball
and deduce from this that percolation diffusion occurs. 
In fact, we will show more -- we will show that the random archipelago is 
avoidable with probability one. 
By Proposition~\ref{prop1}, 
we see that $\E[W(\calAp,\tau)]$ is finite for $\tau \in T$, hence 
the series $W(\calAp,\tau)$ is convergent a.s.\ for each $\tau \in T$. 
For $\tau \in T$, set
\[
F_\tau = \{ \calP \colon W(\calAp,\tau) < \infty\}.
\]
so that $F_\tau$ has probability 1. We have 
\[
1 = \frac{1}{\sigma(T)}\int_T\E[ 1_{F_\tau}]\, d\tau = 
\E \left[\frac{1}{\sigma(T)}\int_T 1_{F_\tau}\, d\tau \right],
\]
from which it follows that $\int_\Sp 1_{F_\tau}\, d\tau = \sigma(T)$ 
with probability one. 
Equivalently, it is almost surely true that $\calP \in F_\tau$ for a.e.\ $\tau \in T$.
In other words, it is almost surely true that $\calAp$ is minimally thin at a set of 
$\tau$ of positive measure on the sphere $\Sp$, hence $\calAp$ is 
almost surely avoidable by Lemma~\ref{Prop.mt}.

Next we prove the reverse implication. 
For a random configuration $\calP$, set 
\[
M_\calP = \{ \tau \in \Sp : \calAp \mbox{ is minimally thin at } \tau\},
\]
similar to \eqref{2.2}. 
Suppose that percolation diffusion occurs. 
Then, with positive probability, $\calAp$ is minimally thin at each point 
of some set of positive surface measure on the sphere, so that
$\E\left[ \int_\Sp 1_{M_\calP}(\tau)\,d\tau\right] >0$. 
Interchanging the order of integration and expectation, we conclude that 
there is a set $T$ of positive measure on the sphere $\Sp$ such that 
$\mathbb{P}\big( \tau \in M_\calP \big) >0$ for $\tau \in T$. 
By Lemma~\ref{Lemma1}, $W(\calAp,\tau) < \infty$ a.s.\ for $\tau \in T$. 
By Lemma~\ref{Lemma4.1}, $\E[W(\calAp,\tau)]$ is finite for $\tau \in T$.
Finally, it follows from Proposition~\ref{prop1} that, 
for $\tau$  in the set $T$ of positive measure on the sphere $\Sp$, 
the Poisson balayage \eqref{7*} is finite. 
\end{proof}

\begin{proof}[Proof of Corollary~\ref{cor1}]In the case that both $\phi$ and 
$\nu$ are radial, the value of the Poisson balayage in \eqref{7*} is independent 
of $\tau \in \Sp$ and equals 
\[
\int_0^1 (1-t^2) \phi(t)^{d-2} \left( \int_{t\Sp}
	\frac{1-\vert x \vert^2}{\vert \tau - x\vert^d}\,d\sigma(x)\right)\nu(t)\,dt,	
\]
where $d\sigma$ is surface measure on the sphere $t\Sp$. 
Hence \eqref{7*} is equivalent to \eqref{7} in the radial setting. 
\end{proof}
%
%
\section{Percolation diffusion in space}
\label{sec5}
The Wiener criterion for minimal thinness of a set $A$ at $\infty$ in $\R^d$,
$d\geq 3$, is 
\beq\label{5.1}
W(A,\infty) = \sum_k \frac{\cp(A\cap Q_k)}{\ell(Q_k)^{d-2}} < \infty
\eeq
(see \cite{GG}, for example) where the cubes $\{Q_k\}$ are obtained by partitioning the 
cube of sidelength $3^j$ (centre 0 and sides parallel to the coordinate axes) into 
$3^{jd}$ cubes of sidelength $3^{j-1}$ and then deleting the central cube. 
Assuming that the radius function $\phi$ and the intensity of the Poisson process
$\nu$ are roughly constant on each cube $Q_k$ (+ version of \eqref{nu}),
the relevant version of Lemma~\ref{Lemma3} is that, for a cube $Q_k$ 
and any point $x_k \in Q$, 
\beq\label{5.2}
\E[\cp (\calAp\cap Q_k)]  \sim  \phi(x_k)^{d-2}\, \mu(Q_k), 
\eeq
and the relevant version of Proposition~\ref{prop1} is 
\beq\label{5.3}
\E\big[W(\calAp,\tau)\big]
\sim
\int_{\R^d\setminus\B} \left(\frac{\phi(x)}{\vert x \vert}\right)^{d-2} \, 
	\nu(x)\,dx.
\eeq
Since the random archipelago $\calAp$ is avoidable in this setting precisely
when it is minimally thin at the point at infinity, the criterion for percolation 
diffusion is that the integral on the right hand side of \eqref{5.3} be finite. 
Again this agrees in principle with a criterion for avoidability in the deterministic,
regularly located setting \cite[Theorem~2]{CO} (see also \cite[Theorem~6]{GG})
which corresponds to constant $\nu$ and $\phi$ radial, namely
\[
\int_1^\infty r \phi(r)^{d-2}\,dr <\infty.
\]
\end{document}